\documentclass[11pt,leqno]{amsart}
\textwidth=30cc
\baselineskip=16pt 
\usepackage{amssymb,amsmath,amsthm,latexsym}
\usepackage{graphicx}
\usepackage{epsfig}
\newtheorem{theorem}{Theorem}[section]
\newtheorem{lemma}[theorem]{Lemma}

\theoremstyle{remark}
\newtheorem{definition}[theorem]{Definition}

\newtheorem{example}[theorem]{Example}

\newcommand\eL{{\mathcal L}}
\newcommand\M{{\mathcal M}}
\newcommand\R{{\mathcal R}}

\newcommand\xt{{(x_0,x_1,\ldots,x_n,x_{n+1})}}

\pagestyle{plain}
\footskip=40pt

\begin{document}

\title{Layers of knot region colorings and higher differentials}
\date{January 28, 2019}
\author{Maciej Niebrzydowski}
\address[Maciej Niebrzydowski]{Institute of Mathematics\\ 
Faculty of Mathematics, Physics and Informatics\\
University of Gda{\'n}sk, 80-308 Gda{\'n}sk, Poland}
\email{mniebrz@gmail.com}

\keywords{ternary quasigroup, region coloring, homology, degenerate subcomplex, Roseman moves, knotted surface}
\subjclass[2000]{Primary: 57M27; Secondary:  55N35, 57Q45}

\thispagestyle{empty}

\begin{abstract}
We inductively define layers of colorings of knot and knotted surface diagrams using ternary quasigroups. Homological invariants from such systems of colorings use shorter differentials and of higher degree than the standard homology differentials, and give access to typically more complex homology groups.
\end{abstract}

\maketitle

\section{Introduction and preliminary definitions}

Shadows in quandle colorings of knot and knotted-surface diagrams \cite{CKS01,FRS95,Joy82,Kam02,KLT15,Mat82,RS00} allow one to use, respectively, the third and the fourth quandle (co)homology groups, instead of the second and the third, in defining invariants.

For region colorings of knot and knotted surface diagrams with elements of ternary quasigroups, we inductively define multiple layers of colorings that come in two types (left and right colorings). We then use such systems of colorings to define (co)homological invariants based on (co)homology groups of ternary quasigroups of arbitrarily high degree, but with a shorter differential than the standard one.

Let us begin with some preliminary definitions.

\begin{definition}\label{quasigroup}
A {\it ternary quasigroup} is a set $X$ equipped with a ternary operation 
$[\ ]\colon X^3\to X$ such that for a quadruple $(x_1,x_2,x_3,x_4)$
of elements of $X$ satisfying $[x_1x_2x_3]=x_4$, specification of any three elements of the quadruple determines the remaining one uniquely. This leads to three additional ternary operations $\eL$, $\M$, $\R\colon X^3\to X$, defined via
\[x_4x_2x_3\eL=x_1,\ x_1x_4x_3\M=x_2,\ \textrm{and}\ x_1x_2x_4\R=x_3.\]
We call them the left, middle, and right division, respectively.
\end{definition}

A finite ternary quasigroup $(X,[\ ])$, with elements numbered $1,\ldots,n$, can be described by a Latin cube, i.e., an $n\times n\times n$ array in which every $i\in\{1,\ldots,n\}$ appears exactly once in every horizontal row, every vertical row, and in every column. Any Latin cube defines a ternary quasigroup. 
See \cite{Bel,BelSan,Sm08} for more details on $n$-ary quasigroups.

\begin{figure}
\begin{center}
\includegraphics[height=5 cm]{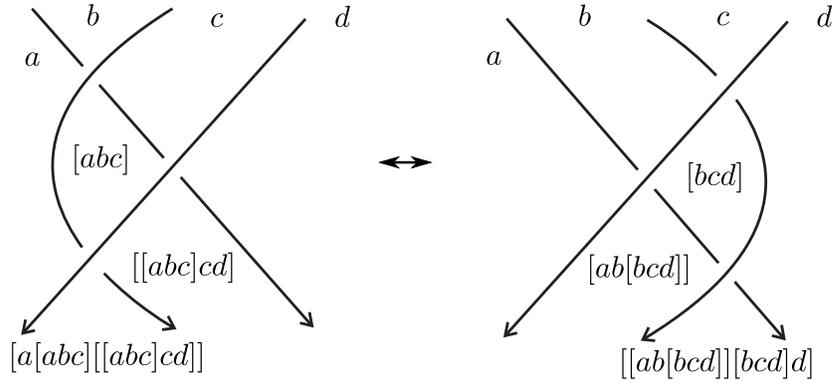}
\caption{Comparing the bottom colors leads to the nesting conditions.}\label{rad3br}
\end{center}
\end{figure}

\begin{definition}(\cite{Nie14}) \label{KTQ}
A knot-theoretic ternary quasigroup (abbreviated to KTQ) is a ternary quasigroup satisfying the left and right nesting conditions derived from the third Reidemeister move (see Fig. \ref{rad3br}): 
\begin{equation}
\forall_{a,b,c,d\in X} \quad [ab[bcd]]=[a[abc][[abc]cd]],  \tag{LN}
\end{equation}
\begin{equation}
\forall_{a,b,c,d\in X} \quad [[abc]cd]=[[ab[bcd]][bcd]d]. \tag{RN} 
\end{equation}
\end{definition}

\begin{example}(\cite{Nie14})
Let $(X,*)$ be a group. Then $(X,[\ ])$, where $[xyz]=x*y^{-1}*z$ is a KTQ. This operation is used in the Dehn presentation of the knot group.
\end{example}

\begin{example}
Let $(X,*)$ be a group. Then $(X,[\ ])$ with $[xyz]=y*z^{-1}*\alpha*x$, where $\alpha$ is an arbitrary fixed element of $X$, is a KTQ. 
\end{example}

In this paper, we use the term knot (resp. knotted surface) for both knots (resp. surface-knots) and links (resp. surface-links).

The conditions LN and RN, together with ternary quasigroups motivated by the Dehn presentation of the knot group (which does not require orientation), were used to define knot invariants in \cite{Nie14}.
In \cite{NeNe17} the authors used compositions of two binary quasigroup operations of the form $x*(y\cdot z)$, satisfying the conditions LN and RN, to define invariants for oriented knots. Ternary quasigroups with operation $[\ ]$ decomposable in this way, together with the ones possessing the opposite decomposition $[xyz]=(x*y)\cdot z$, form the family known in the literature as {\it reducible ternary quasigroups}. We also mention the paper \cite{Dev09}, in
which the author constructed combinatorial invariants of knots based on colorings of regions of a knot diagram by elements of some finite ring $R$, with coloring requirements involving the equation $pa+b-c-pd=0$, for $a$, $b$, $c$, $d\in R$ and an invertible element $p\in R$.
For general colorings with ternary quasigroups and their homology see \cite{Nie17a}; we will briefly recall the relevant definitions in this introduction.

\begin{definition}
Let $(X,[\ ])$ and $(Y,<\ >)$ be two ternary quasigroups. We say that a function
$f\colon X\to Y$ is a ternary quasigroup homomorphism if 
\[
f([abc])=<f(a)f(b)f(c)>,
\]
for any $a$, $b$, $c\in X$.
\end{definition}

A computer search (using GAP \cite{GAP4}) for Latin cubes satisfying the conditions LN and RN indicates that, up to isomorphism, there are 2 KTQs of size two, 7 of size three, 37 with four elements, 23 with five elements,
and over 190 with six elements (in this case the list could be far from being complete).

We will now recall the definition of KTQ colorings of diagrams of knots and knotted surfaces. For the terminology related to (broken) diagrams of knotted surfaces in $\mathbb{R}^3$ see, for example, \cite{KnSurf}.

\begin{definition}
Let $D$ denote a knot diagram in the interior of a compact surface $F$, or on a plane, or a knotted surface diagram in $\mathbb{R}^3$.
{\it Regions} of $D$ are the connected components of the complement of the universe of $D$ in, respectively, $F$, the plane, or $\mathbb{R}^3$. Their set will be denoted by $Reg(D)$.
\end{definition}

\begin{definition}
Let $D$ be an oriented knot diagram. In the rest of the paper
$e$ will denote an edge of $D$ (as in the underlying graph of $D$), and $cr$ will denote a crossing. If $D$ is an oriented knotted surface diagram, then $db$ will denote a double point edge, and $tr$ is a notation for a triple point.
For a given $e$, $cr$, $db$, or $tr$, its {\it source region} is the region in its neighborhood such that all the co-orientation arrows point away from it, and the {\it target region} is the region with all co-orientation arrows pointing into it.
\end{definition}

\begin{figure}
\begin{center}
\includegraphics[height=3.7 cm]{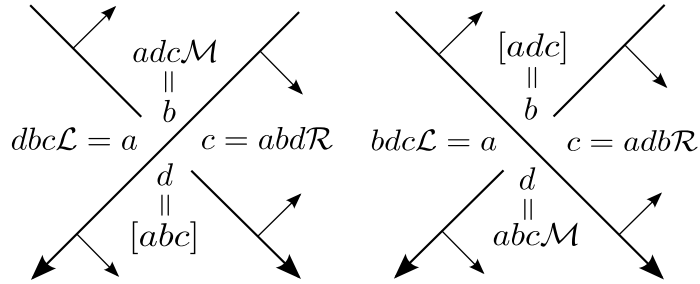}
\caption{Coloring classical crossings with a KTQ.}\label{quasicol}
\end{center}
\end{figure}

\begin{figure}
\begin{center}
\includegraphics[height=4 cm]{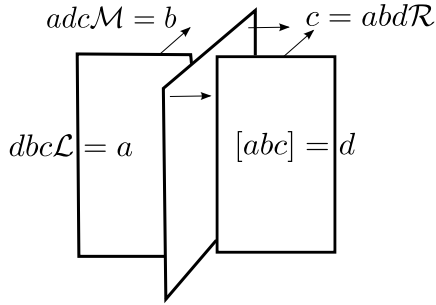}
\caption{The coloring rule around an edge of double points in a surface diagram.}\label{surfcr}
\end{center}
\end{figure}

The following definition will be helpful in defining KTQ colorings, and in homological considerations.

\begin{definition}\label{colpath}
Let $(X,[\ ])$ be a KTQ, $D$ be an oriented knot or knotted surface diagram,
and let $\mathcal{C}\colon Reg(D)\to X$ be a function assigning elements of $X$ to the regions of $D$. Then, for a given $e$, $cr$, $db$, or $tr$ of $D$:
\begin{enumerate}
\item[--] a colored path of $e$, denoted by $cp(e,\mathcal{C})$, is the pair $(\mathcal{C}(r_0),\mathcal{C}(r_1))$, where $r_0$ is the source region of $e$, and $r_1$ is its target region;
\item[--] a colored path of $cr$, denoted by $cp(cr,\mathcal{C})$, is the triple $(\mathcal{C}(r_0),\mathcal{C}(r_1),\mathcal{C}(r_2))$, where $r_0$ is the source region of $cr$, $r_0$ and $r_1$ are separated by an under-arc of $cr$, and $r_2$ is the target region of $cr$;
\item[--] a colored path of $db$, denoted by $cp(db,\mathcal{C})$, is the triple $(\mathcal{C}(r_0),\mathcal{C}(r_1),\mathcal{C}(r_2))$, where $r_0$ is the source region of $db$, $r_0$ and $r_1$ are separated by an under-sheet of $db$, and $r_2$ is the target region of $db$;
\item[--] a colored path of $tr$, denoted by $cp(tr,\mathcal{C})$, is 
$(\mathcal{C}(r_0),\mathcal{C}(r_1),\mathcal{C}(r_2),\mathcal{C}(r_3))$, where $r_0$ is the source region of $tr$, $r_0$ and $r_1$ are separated by the bottom sheet, $r_1$ and $r_2$ by the middle sheet, and $r_3$ is the target region for $tr$.
\end{enumerate}
\end{definition}

\begin{definition}\label{manycols}
Let $D$ denote an oriented knot diagram on a plane or on an oriented compact surface $F$, or a knotted surface diagram in $\mathbb{R}^3$. Let $(X,[\ ],\eL,\M,\R)$ be a KTQ. For any crossing $cr$ of $D$ (resp. double point edge $db$ of $D$) and its regions $r_0$, $r_1$, $r_2$ as in Def. \ref{colpath}, let $r_3$ be the remaining fourth region in the neighborhood of $cr$ (resp. $db$).
For a map $\mathcal{C}$ to be a {\it KTQ coloring} of $D$, it is required that
$\mathcal{C}(r_3)=[\mathcal{C}(r_0)\mathcal{C}(r_1)\mathcal{C}(r_2)]$. To put it succintly: the value of the operation 
$[\ ]$ on the colored path of $cr$ (resp. $db$) is assigned to the fourth region of $cr$ (resp. $db$). See Fig. \ref{quasicol} and Fig. \ref{surfcr}.
\end{definition}

\begin{theorem}(\cite{Nie17a})
The number of KTQ colorings of a knot or knotted surface diagram is not changed by the Reidemeister or Roseman moves.
\end{theorem}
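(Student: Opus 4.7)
The plan is to verify invariance separately for each elementary move, constructing a bijection between KTQ colorings of the two sides. Outside a small neighborhood of the move-support, the regions on the two sides are canonically identified, so the task reduces to showing that, once the boundary regions of the neighborhood are colored, the internal constraints admit exactly the same number of consistent completions on both sides (in fact, exactly one).

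For Reidemeister I (and the Roseman branch-point move), a kink introduces one new crossing and one new region; three of the four values around the new crossing are fixed by the external coloring, and Definition \ref{quasigroup} then determines the remaining value uniquely via one of $[\ ],\eL,\M,\R$ depending on which region is new. For Reidemeister II (and its surface analogue in which a pair of cancelling double-point arcs is removed), one new region is created between two new crossings; the two constraints at these crossings are mutually inverse under the divisions of Definition \ref{quasigroup}, so the extension exists and is unique.

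For Reidemeister III, most regions are geometrically unchanged by the move, so their colors agree on both sides tautologically. The nontrivial requirement is that the two distinguished ``bottom'' regions receive the same colors before and after the move. Reading off the coloring values using Definitions \ref{colpath} and \ref{manycols} (Fig. \ref{rad3br}) shows that these two equalities are exactly the identities LN and RN; since by Definition \ref{KTQ} a KTQ satisfies both, the coloring counts agree. Moreover, one checks that the other region colors are also determined consistently, because each is obtained by applying $[\ ]$ to an already-matched triple.

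For the remaining Roseman moves on knotted-surface diagrams, the argument is parallel. The moves not involving a triple point (branch-point moves, double-curve cancellation, passage of a sheet over a branch point, etc.) are handled by iterated quasigroup-uniqueness arguments in the same spirit as R1 and R2. The triple-point and tetrahedral moves reduce to local identities at the double-point edges meeting a triple point; by Fig. \ref{surfcr} the coloring rule at a double-point edge is governed by the same operation $[\ ]$ as at a classical crossing (Fig. \ref{quasicol}), so each such identity is a relabelled instance of LN or RN. The main obstacle is the proliferation of sub-cases: each move has several variants coming from co-orientation conventions and over/under sheet choices, and translating each variant into LN or RN requires bookkeeping in the order of the arguments of $[\ ]$. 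Once the conventions of Definitions \ref{colpath} and \ref{manycols} are fixed, this is routine but tedious, and gives the desired bijection of colorings for every move.
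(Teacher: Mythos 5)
This theorem is stated in the paper only as a citation of \cite{Nie17a}; no proof is reproduced here, so your attempt can only be measured against the approach the paper signals. On that score you are on target. The move-by-move bijection via the uniqueness clause of Definition \ref{quasigroup} is the intended mechanism, and your analysis of the third Reidemeister move --- that all outer region colors match tautologically and the only non-trivial requirement is that the two ``bottom'' regions receive equal colors, these two equalities being precisely LN and RN --- is exactly what Figure \ref{rad3br} records (its caption reads ``Comparing the bottom colors leads to the nesting conditions''). Your R1 and R2 arguments are also sound: in each case every new crossing contributes one relation in which at most one adjacent region color is undetermined, and the divisions $\eL$, $\M$, $\R$ guarantee a unique consistent completion; your observation that the two bigon relations in R2 are equivalent under the divisions handles the sub-case where only the bigon is a genuinely new global region. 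One small caveat for the classical case: only one oriented/signed variant of R3 literally reads off as LN and RN, so you should either check the remaining variants or invoke the standard reduction of the other R3 types to this one via R2 moves.

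The substantive gap is the knotted-surface half. You dispose of the tetrahedral (quadruple-point) move by saying that each identity at a double-point edge ``is a relabelled instance of LN or RN,'' but that is not what happens: around a quadruple point the region colors are obtained by \emph{iterated} applications of $[\ ]$, and the identities one must verify are composite consequences of LN and RN, not single relabelled instances --- exactly as LN and RN themselves are not instances of the quasigroup axioms but new identities extracted from R3. This verification is the actual content of the surface statement and is where the hypothesis ``KTQ'' earns its keep in dimension two; it cannot be absorbed into ``routine but tedious bookkeeping'' without being carried out. The branch-point and double-curve Roseman moves are, as you say, handled by uniqueness arguments in the spirit of R1 and R2. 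As it stands, your proposal establishes the classical half and correctly outlines, but does not prove, the surface half.
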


Colorings form the foundation for constructing homological invariants.

\begin{definition}(\cite{Nie17a}) \label{homology}
Let $R$ denote a commutative unital ring.
Given a KTQ $(X,[\ ],\eL,\M,\R)$, we can define its homology as follows:\\
Let $C_n(X):=R\langle X^{n+2}\rangle$ be the $R$-module generated freely by $(n+2)$-tuples 
$(x_0,x_1,\ldots, x_n,x_{n+1})$ of elements of $X$. Define
\[
\partial_n^L\xt=\sum_{i=0}^n(-1)^i d_i^{n,L}\xt,
\]
where the formula for $d_i^{n,L}$ is defined inductively by
\begin{align*}
d_0^{n,L}\xt & =(x_1,\ldots,x_{n+1}),\\
d_{i}^{n,L}\xt & =d_{i-1}^{n,L}(x_0,\ldots,x_{i-1},[x_{i-1}x_ix_{i+1}],x_{i+1},\ldots,x_{n+1})
\end{align*}
for $i\in\{1,\ldots,n\}$. We also use the second differential: 
\[
\partial_n^R\xt=\sum_{i=0}^n(-1)^i d_i^{n,R}\xt,
\]
where the formula for $d_i^{n,R}$ is defined inductively by
\begin{align*}
d_n^{n,R}\xt & =(x_0,\ldots,x_{n}),\\
d_{i-1}^{n,R}\xt & =d_{i}^{n,R}(x_0,\ldots,x_{i-1},[x_{i-1}x_ix_{i+1}],x_{i+1},\ldots,x_{n+1})
\end{align*}
for $i\in\{1,\ldots,n\}$.
Next, we combine these differentials in a standard way:
\[
\partial_n\xt=\sum_{i=0}^n(-1)^i (d_i^{n,L}-d_i^{n,R})\xt,
\]
that is we take
\[
\partial_n=\partial_n^L-\partial_n^R.
\]

Let $x$ denote the $(n+2)$-tuple $\xt$. 

We can describe the coordinates of $d_i^{n,L}$ and $d_i^{n,R}$ inductively, which can be useful in concise computer programs.
\[
d_i^{n,L}=(d_{i,1}^{n,L},\ldots,d_{i,k}^{n,L},\ldots,d_{i,n+1}^{n,L})
\] 
is calculated from right to left. For $i\in\{0,\ldots,n\}$ and $k\in\{1,\ldots,n+1\}$,
\begin{equation}\label{leftcoord}
d_{i,k}^{n,L}x = \left\{
\begin{array}{rl}
[x_{k-1}x_k(d_{i,k+1}^{n,L}x)] & \text{if } k\leq  i\\
x_k & \text{if } k > i.
\end{array} \right.
\end{equation}
\[
d_i^{n,R}=(d_{i,0}^{n,R},\ldots,d_{i,k}^{n,R},\ldots,d_{i,n}^{n,R})
\] 
is calculated from left to right. For $i\in\{0,\ldots,n\}$ and $k\in\{0,\ldots,n\}$,
\begin{equation}\label{rightcoord}
d_{i,k}^{n,R}x = \left\{
\begin{array}{rl}
[(d_{i,k-1}^{n,R}x)x_kx_{k+1}] & \text{if } k > i\\
x_k & \text{if } k \leq i.
\end{array} \right.
\end{equation}
\end{definition}

\begin{example} In low dimensions the differential $\partial$ is as follows:
\[
\partial_0(a,b)=b-a,
\]
\begin{align*}
\partial_1(a,b,c)&=(b,c)-(a,[abc])\\
 &-([abc],c)+(a,b),
\end{align*}
\begin{align*} 
\partial_2(a,b,c,d)&=(b,c,d)-(a,[abc],[[abc]cd])\\
&-([abc],c,d)+(a,b,[bcd])\\
&+([ab[bcd]],[bcd],d)-(a,b,c),
\end{align*}
\begin{align*}
\partial_3(a,b,c,d,e)&=(b,c,d,e)-(a,[abc],[[abc]cd],[[[abc]cd]de])\\
&-([abc],c,d,e)+(a,b,[bcd],[[bcd]de])\\
&+([ab[bcd]],[bcd],d,e)-(a,b,c,[cde])\\
&-([ab[bc[cde]]],[bc[cde]],[cde],e)+(a,b,c,d).
\end{align*}
\end{example}

\begin{definition}\cite{Nie17a}\label{degs}
For a ternary quasigroup $(X,[\ ],\eL,\M,\R)$ satisfying axioms LN and RN, and for $n\geq 1$,
let $C_n^D(X,[\ ])$ denote the $R$-module generated freely by $(n+2)$-tuples 
$x=(x_0,x_1,\ldots, x_n,x_{n+1})$ of elements of $X$ satisfying one of the following equivalent conditions:
\begin{itemize}
\item[(D1)] $x$ contains $a$, $b$, $abb\R$ on three consecutive coordinates, for some $a$ and $b\in X$;
\item[(D2)] $x$ contains $a$, $b$, $c$ on three consecutive coordinates, for some $a$, $b$ and $c\in X$, such that $b=[abc]$;
\item[(D3)] $x$ contains $bbc\eL$, $b$, $c$ on three consecutive coordinates, for some $b$ and $c\in X$. 
\end{itemize}
For $n<1$, we take $C_n^D(X,[\ ])=0$.
\end{definition}

The condition (D2) was suggested in \cite{NOO18}, and indeed the equivalence can be seen as follows: $b=[abc]$ if and only if $a=bbc\eL$ and $c=abb\R$. The condition (D2) is convenient to use, and we also note that in this form it can be used to define degenerate part for homology of general algebras satisfying the axioms LN and RN (that is, not necessarily ternary quasigroups; see Theorem \ref{pkdegLR} for a generalization of this fact).

\begin{definition}\label{standardnormalized}
We proved in \cite{Nie17a} that 
$(C_n^D(X,[\ ]),\partial_n)$ is a chain subcomplex of 
$(C_n(X),\partial_n)$, and defined the quotient complex
\[
(C_n^{N}(X,[\ ]),\partial_n)=(C_n(X)/C_n^D(X,[\ ]),\partial_n),
\]
with the induced differential (and the same notation). We called its homology {\it normalized}, and denoted it by $H^N(X,[\ ])$.
\end{definition}

If $(X,[\ ])$ is a KTQ used for a coloring $\mathcal{C}$ of a knot (resp. knotted surface) diagram, then such colored diagram represents a cycle in $H_1^N(X,[\ ])$ (resp.  $H_2^N(X,[\ ])$). The cycle can be written as $\sum_{cr}\tau(cr)cp(cr,\mathcal{C})$ 
(resp. $\sum_{tr}\tau(tr)cp(tr,\mathcal{C})$), where the sum is taken over all crossings (resp. triple points) of the diagram, and $\tau$ denotes the sign of a crossing (resp. triple point). The homology class of such a cycle is not changed by Reidemeister (resp. Roseman) moves, see \cite{Nie17a} for more details.

\section{Layered colorings and the corresponding homology}

\begin{figure}
\begin{center}
\includegraphics[height=4 cm]{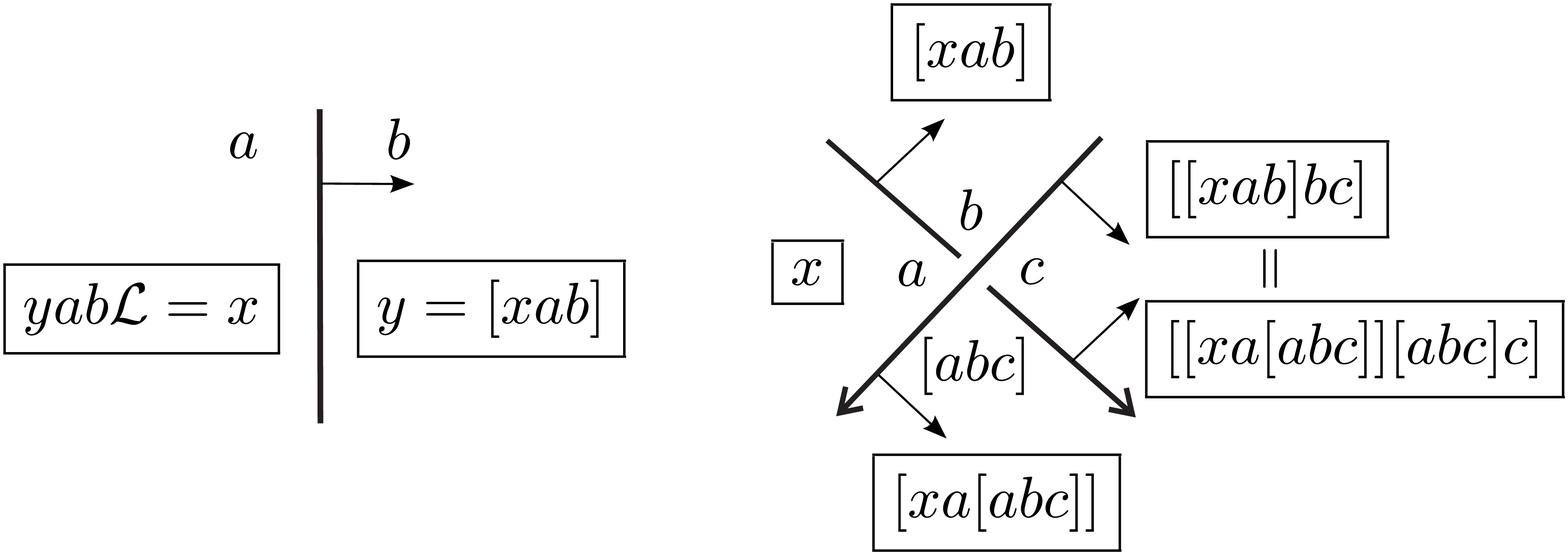}
\caption{The left coloring rule and the condition RN (boxed colors belong to the left coloring).}\label{shadowa}
\end{center}
\end{figure}

\begin{figure}
\begin{center}
\includegraphics[height=4 cm]{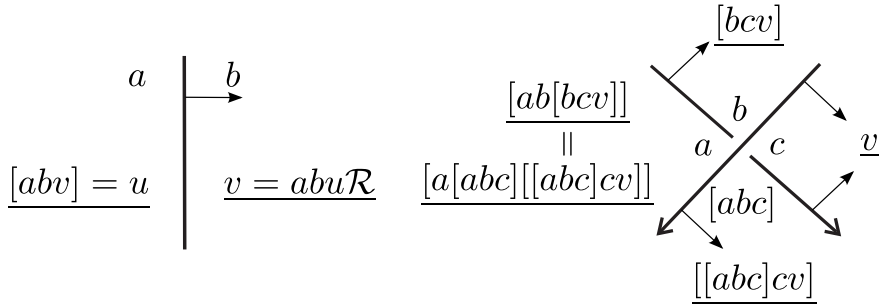}
\caption{The right coloring rule and the condition LN (underlined colors belong to the right coloring).}\label{shadowb}
\end{center}
\end{figure}

\begin{definition} \label{colsys}
Given a knot diagram $D$, a KTQ $(X,[\ ],\eL,\M,\R)$, and a KTQ coloring
$\mathcal{C}\colon Reg(D)\to X$, we can define new KTQ colorings. 

A {\it left coloring}
$\mathcal{C}_L\colon Reg(D)\to X$ is related to $\mathcal{C}$ as on the left side of Fig \ref{shadowa}. More specifically, if the co-orientation arrow points from the region $r_1$ to $r_2$, $\mathcal{C}(r_1)=a$, $\mathcal{C}(r_2)=b$, and $\mathcal{C}_L(r_1)=x$, then $y:=\mathcal{C}_L(r_2)=[xab]$, which is equivalent to $x=yab\eL$. The fact that the KTQ coloring rule is satisfied for the new colors follows from the axiom LN. The right side of Fig. \ref{shadowa}
shows the compatibility of the two ways of extending the coloring $\mathcal{C}_L$ around a crossing thanks to the axiom RN.

A {\it right coloring}
$\mathcal{C}_R\colon Reg(D)\to X$ is obtained from $\mathcal{C}$ as on the left side of Fig. \ref{shadowb}. That is, if the co-orientation arrow points from the region $r_1$ to $r_2$, $\mathcal{C}(r_1)=a$, $\mathcal{C}(r_2)=b$, and $\mathcal{C}_R(r_2)=v$, then $u:=\mathcal{C}_R(r_1)=[abv]$, which is equivalent to $v=abu\R$. The KTQ coloring rule for $\mathcal{C}_R$ is satisfied due to the axiom RN, and the compatibility around a crossing follows from LN.

Similar rules apply to the left and right colorings of a KTQ colored knotted surface diagram $D$ in $\mathbb{R}^3$ (see Fig. \ref{surfleftright}).
\end{definition}

\begin{definition}\label{layered}
Nothing prevents us from considering a left coloring of a left coloring, or a right coloring of a right coloring. Due to our homological conventions, it will be useful to take chains of colorings of $D$ of the form
\[
\mathcal{C}^{(p,k)}:=(\mathcal{C}_0,\mathcal{C}_1,\ldots,\mathcal{C}_{p-1},\mathcal{C}_p,\mathcal{C}_{p+1},\ldots,\mathcal{C}_{p+k+1}),
\]
where $\mathcal{C}_p$ denotes a KTQ coloring that we start with, $\mathcal{C}_{p-1}$ is its left coloring, after which there are inductively defined left colorings
$\mathcal{C}_{p-2}$ (which is a left coloring for $\mathcal{C}_{p-1}$) down to $\mathcal{C}_0$ which is defined last. The right colorings start with $\mathcal{C}_{p+1}$, whose right coloring is $\mathcal{C}_{p+2}$, and end with $\mathcal{C}_{p+k+1}$. That is, $p$ left colorings are followed by a basic KTQ coloring $\mathcal{C}_p$, which is followed by $k$ right colorings. We call such a system of colorings of $D$ a {\it layered coloring}.
\end{definition}

\begin{figure}
\begin{center}
\includegraphics[height=3.7 cm]{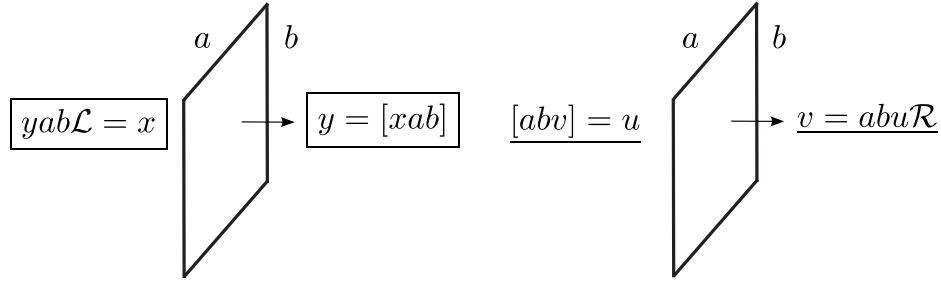}
\caption{The left and right colorings for a surface sheet.}\label{surfleftright}
\end{center}
\end{figure}

\begin{lemma}
Let $D$ denote a classical knot diagram on a plane or on an oriented compact surface, or a knotted surface diagram in $\mathbb{R}^3$. Then the number of layered colorings $\mathcal{C}^{(p,k)}=(\mathcal{C}_0,\mathcal{C}_1,\ldots,\mathcal{C}_{p-1},\mathcal{C}_p,\mathcal{C}_{p+1},\ldots,\mathcal{C}_{p+k+1})$ of $D$ is not changed by the moves applicable to $D$, that is, Reidemeister or Roseman moves.
\end{lemma}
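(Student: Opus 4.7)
My plan is to induct on $p+k$, reducing the claim to the invariance of the number of base KTQ colorings from the cited theorem in \cite{Nie17a}. The key auxiliary fact I would first isolate is the following: given any KTQ coloring $\mathcal{C}$ of $D$, the number of left colorings $\mathcal{C}_L$ extending $\mathcal{C}$ depends only on the KTQ $(X,[\,])$ and on the combinatorics of the region adjacency graph of $D$, not on the specific base coloring. Indeed, by Definition~\ref{colsys}, the rule $y=[xab]$ uniquely propagates $\mathcal{C}_L$ across each co-orientation arrow. The axiom RN (respectively LN, for right colorings) ensures that this propagation is consistent around every crossing, every double-point edge, and every triple point; this is the content of the right-hand sides of Figures~\ref{shadowa}, \ref{shadowb}, and \ref{surfleftright}. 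Consequently, $\mathcal{C}_L$ is freely determined by a single value in $X$ per connected component of the region adjacency graph of $D$, and the same holds for right colorings; since this graph is connected for any diagram on a connected surface (the plane, $S^2$, or $\mathbb{R}^3$), there are exactly $|X|$ such extensions per choice of base.

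Armed with this fact, I would peel off one outer layer at a time. Writing $\mathcal{C}^{(p,k)}$ as $(\mathcal{C}_0, \mathcal{C}^{(p-1,k)})$ when $p\geq 1$, or as $(\mathcal{C}^{(p,k-1)}, \mathcal{C}_{p+k+1})$ when $k\geq 0$, the inductive step runs as follows: by hypothesis, the count of inner layered colorings of $D$ is preserved by each applicable move, and by the auxiliary fact, each such inner layered coloring extends in exactly $|X|$ ways to a full $\mathcal{C}^{(p,k)}$ on both sides of the move. Multiplying, the total count is preserved. The base case, with no auxiliary layers beyond $\mathcal{C}_p$, is precisely the invariance theorem of \cite{Nie17a} cited above.

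The main obstacle is verifying the auxiliary fact rigorously for knotted surface diagrams, specifically the consistency of propagating $\mathcal{C}_L$ around a triple point. One needs to check that starting from an initial left-color on one of the eight octants surrounding $tr$ and extending via $y=[xab]$ along any two sequences of co-orientation arrows yields the same result on each of the remaining seven octants. This amounts to several instances of axiom RN applied at each of the three sheets meeting at $tr$, combined with the already-verified compatibility at double-point edges (the surface analogue of Figure~\ref{shadowa}). No essentially new obstruction appears for Roseman moves beyond what the classical Reidemeister III case already demands, since they are closed up by exactly the same LN/RN identities, now applied simultaneously to the base and the shadow layer.
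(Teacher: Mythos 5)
Your counting strategy is sound for diagrams on the plane (or $S^2$) and for knotted surface diagrams in $\mathbb{R}^3$, but your key auxiliary fact --- that every base KTQ coloring admits \emph{exactly} $|X|$ left (resp.\ right) extensions --- is false in the remaining case covered by the lemma, namely diagrams on an oriented compact surface $F$ of positive genus. Connectedness of the region adjacency graph only tells you that an extension, if it exists, is determined by its value on one region. To conclude that \emph{every} initial value propagates to a globally consistent left coloring you also need the holonomy of the propagation rule $x\mapsto[xab]$ to be trivial around every cycle of the adjacency graph. Local consistency at crossings (axiom RN) and the cancellation of back-and-forth arc crossings give this for cycles that bound in the ambient space, which is why the argument closes up on the plane and in $\mathbb{R}^3$; but around a non-contractible loop in $F$ the holonomy need not be the identity. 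Concretely, take the KTQ $[xyz]=y*z^{-1}*\alpha*x$ from the paper's second example with $\alpha\neq e$, and let $D$ be a non-separating simple closed curve on the torus, so that the single region $r$ is adjacent to itself across the curve. A base coloring $\mathcal{C}(r)=a$ exists for every $a$, but a left coloring must satisfy $x=[xaa]=\alpha*x$, which has no solutions. So the number of left extensions can be $0$ rather than $|X|$, and in general it depends on the base coloring and on $D$; your multiplication-by-$|X|$ step then has nothing to multiply by.

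The paper's proof avoids this counting issue entirely by exhibiting a bijection move-by-move: a base coloring $\mathcal{C}$ corresponds to a unique $\mathcal{C}'$ after the move (colors of new regions being determined by persisting ones), a left coloring is pinned down by its base coloring together with its value on a single region away from the support of the move, and hence $\mathcal{C}_L\mapsto\mathcal{C}_L'$ is a bijection between left extensions of $\mathcal{C}$ and of $\mathcal{C}'$ \emph{whatever their number is}. To repair your argument along your own lines, you would need to replace ``exactly $|X|$ extensions'' by ``the number of extensions of $\mathcal{C}$ over $D$ equals the number of extensions of $\mathcal{C}'$ over $D'$,'' which is essentially the paper's bijection restated; at that point the induction on $p+k$ goes through. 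Your discussion of consistency around triple points is a reasonable outline of what must be checked, but note that branch points also need a (routine) local consistency check for the propagation of left and right colorings on knotted surface diagrams.
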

\begin{proof} 
We proved in \cite{Nie17a} that the number of KTQ colorings is an invariant under Reidemeister and Roseman moves. Thus, for a KTQ coloring $\mathcal{C}$ of a diagram $D$, there is a corresponding coloring $\mathcal{C}'$ of a diagram $D'$ after the move. Similarly, if $\mathcal{C}_L$ is a left coloring for $\mathcal{C}$, then it is a KTQ coloring, and there is a corresponding coloring $\mathcal{C}_L'$ for $D'$. The colors of the new regions that may appear after a Reidemeister or Roseman move are uniquely determined by the colors of regions present before the move (see \cite{Nie17a}). Also, a left coloring is uniquely determined by a choice of a color for a single region and the coloring for which it is a left coloring. It follows that $\mathcal{C}_L'$ is a left coloring for $\mathcal{C}'$. Similar considerations apply to the right colorings.  
\end{proof}

Now we will construct a homology, which can be viewed as a generalization of the standard KTQ homology from Def. \ref{standardnormalized}. Layered colorings will yield cycles in this new homology, such that their classes are invariant under Reidemeister and Roseman moves. First, let us recall the definition of a presimplicial module.

\begin{definition}
Let $R$ be a commutative unital ring. A {\it presimplicial module} is a family $C=(C_n)$ of $R$-modules together with face maps $d_i^n\colon C_n\to C_{n-1}$, $i=0,1,\ldots,n$, satisfying 
\[
d_i^{n-1}d_j^n=d_{j-1}^{n-1}d_i^n
\]
for $0\leq i<j\leq n$.
\end{definition}

We proved in \cite{Nie17a} that if $(X,[\ ])$ satisfies the conditions LN and RN, then for $C_n$ and $d_i^n:=d_i^{n,L}-d_i^{n,R}$ as in Def. \ref{homology}, $(C_n,d_i^n)$ is a presimplicial module. We will use the following simple lemma in which we omit some indices $n$ for clarity.

\begin{lemma}
Let $(C_n,d_i)$ be a presimplicial module, and $p, k\geq 0$ be fixed integers. Then
$(C_n,\partial^{(p,k)}_n)$, where $\partial^{(p,k)}_n=\sum_{i=p}^{n-k}(-1)^id_i$, is a chain complex.
\end{lemma}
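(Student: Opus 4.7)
The plan is to verify $\partial_{n-1}^{(p,k)}\circ \partial_n^{(p,k)}=0$ by the standard alternating-sum pairing argument, which is how one proves that $\partial_n = \sum_{i=0}^n (-1)^i d_i$ squares to zero in any presimplicial module, with care taken to check that the restricted range of summation still works.

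First I would expand
\[
\partial_{n-1}^{(p,k)}\partial_n^{(p,k)}=\sum_{j=p}^{n-1-k}\sum_{i=p}^{n-k}(-1)^{i+j}\,d_j^{n-1}d_i^n
\]
and split the double sum into the piece $S_1$ with $j<i$ and the piece $S_2$ with $j\geq i$. For $S_1$, I apply the presimplicial identity $d_j^{n-1}d_i^n=d_{i-1}^{n-1}d_j^n$ (valid precisely when $j<i$) and then reindex via $a=j$, $b=i-1$, which picks up one extra sign from $(-1)^{i+j}=-(-1)^{a+b}$.

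Next I would compare the index sets. After reindexing, $S_1$ becomes a sum over pairs $(a,b)$ with $p\leq a\leq b\leq n-1-k$, whereas $S_2$ is already indexed by $p\leq i\leq j\leq n-1-k$. The matching of the upper bound is the only thing that requires a second look: the upper limit $n-k$ on $i$ in $\partial_n^{(p,k)}$ becomes $n-1-k$ after the shift $i\mapsto i-1$, which is exactly the upper limit on $j$ in $\partial_{n-1}^{(p,k)}$. The lower limit $p$ is preserved because it does not depend on $n$. Hence $S_1$ and $S_2$ are indexed by the same set, and $S_1=-S_2$, giving $\partial_{n-1}^{(p,k)}\partial_n^{(p,k)}=0$.

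I do not expect a real obstacle here: the argument is a routine bookkeeping manipulation, essentially the same one that shows $\partial_n=\sum_{i=0}^n(-1)^i d_i$ is a differential in any presimplicial module. The only point worth emphasizing in the write-up is the compatibility of the truncation parameters $p$ and $k$ with the index shift, i.e.\ that cutting off the alternating sum by fixed amounts at both ends is exactly what is needed so that the ranges in consecutive degrees still line up after applying the presimplicial relation.
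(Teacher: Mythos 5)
Your proposal is correct and follows essentially the same route as the paper: expand the double sum, split at the diagonal, apply the presimplicial identity to the off-diagonal piece, and reindex to see that the two pieces cancel, with the key observation (as you note) that the fixed cutoffs $p$ and $k$ make the index ranges in consecutive degrees match after the shift. No gaps.
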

\begin{proof}
\begin{align*}
\partial^{(p,k)}_{n-1}\partial^{(p,k)}_n &=\sum_{i=p}^{n-1-k}(-1)^id_i(\sum_{j=p}^{n-k}(-1)^jd_j)=\sum_{\substack{p\leq i\leq n-1-k \\ p\leq j\leq n-k}}(-1)^{i+j}d_id_j\\
&=\sum_{p\leq i<j\leq n-k} (-1)^{i+j}d_id_j+ \sum_{p\leq j\leq i\leq n-1-k} (-1)^{i+j}d_id_j\\
&=\sum_{p\leq i<j\leq n-k} (-1)^{i+j}d_{j-1}d_i+ \sum_{p\leq j\leq i\leq n-1-k}(-1)^{i+j}d_id_j\\
&\stackrel{(j'=j-1)}{=}\sum_{p\leq i\leq j'\leq n-1-k} (-1)^{i+j'+1}d_{j'}d_i+ \sum_{p\leq j\leq i\leq n-1-k} (-1)^{i+j}d_id_j=0.
\end{align*}
\end{proof}

\begin{definition}
Let $X$ be a set with a ternary operation $[\ ]\colon X^3\to X$ satisfying the conditions LN and RN.
Define the chain groups $C_n$ and differentials $d_i^{n,L}$, $d_i^{n,R}$ as in Def. \ref{homology}. For $p, k\geq 0$,
we introduce the following notation: $\partial^{(p,k),L}_n=\sum_{i=p}^{n-k}(-1)^id_i^{n,L}$,
$\partial^{(p,k),R}_n=\sum_{i=p}^{n-k}(-1)^id_i^{n,R}$, 
and $\partial^{(p,k)}_n=\sum_{i=p}^{n-k}(-1)^id_i^n$.
We denote the homology of the chain complex $(C_n,\partial^{(p,k)}_n)$ by
$H^{(p,k),u}(X,[\ ])$, where $u$ stands for unnormalized.
\end{definition}

Now we define a weaker form of degeneracy. The idea of late degeneracy in quandle homology was used in \cite{LN03,PrPu16}. In our case, the degeneracy is both late and early at the same time. 

\begin{definition}\label{pkdegs}
For a set $X$ with a ternary operation $[\ ]\colon X^3\to X$, a commutative unital ring $R$, integers $p$, $k\geq 0$, and $n-p-k\geq 1$,
let $C_n^{(p,k),D}(X,[\ ])$ denote the $R$-module generated freely by $(n+2)$-tuples 
$x=(x_0,x_1,\ldots, x_n,x_{n+1})$ of elements of $X$ with an index $i$ such that $p\leq i$, $i+2\leq n+1-k$, and $x_{i+1}=[x_{i}x_{i+1}x_{i+2}]$. In other words, we ignore degeneracy as defined in Def. \ref{degs} if it occurs on the first $p$, or on the last $k$ coordinates of $x$. For $n-p-k<1$, we take $C_n^{(p,k),D}(X,[\ ])=0$.
\end{definition}

\begin{lemma}\label{pkdegL}
Let $X$ be a set with a ternary operation $[\ ]$ satisfying the condition RN. Then
\[
\partial_n^{(p,k),L}(C_n^{(p,k),D}(X,[\ ]))\subset C_{n-1}^{(p,k),D}(X,[\ ]).
\]
\end{lemma}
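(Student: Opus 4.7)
The plan is to fix a generator $x=(x_0,\ldots,x_{n+1})$ of $C_n^{(p,k),D}(X,[\ ])$ with a chosen degeneracy index $i_0$ (so $p\le i_0$, $i_0+2\le n+1-k$, and $x_{i_0+1}=[x_{i_0}x_{i_0+1}x_{i_0+2}]$) and to split the sum
\[
\partial_n^{(p,k),L}(x)=\sum_{j=p}^{n-k}(-1)^j d_j^{n,L}(x)
\]
into three groups according to the position of $j$ relative to $i_0$: the pair $j\in\{i_0,i_0+1\}$, the indices $j<i_0$, and the indices $j>i_0+1$. I will argue that the middle pair cancels and that each of the remaining terms lies in $C_{n-1}^{(p,k),D}(X,[\ ])$.

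For the pair $j=i_0$ and $j=i_0+1$, I compute directly from \eqref{leftcoord}. The entry of $d_{i_0+1}^{n,L}(x)$ at position $i_0+1$ is $[x_{i_0}x_{i_0+1}x_{i_0+2}]$, which equals $x_{i_0+1}$ by the degeneracy relation; once this identification is made, the backward recursion produces exactly the same values as $d_{i_0}^{n,L}(x)$ at every remaining coordinate. Hence $d_{i_0}^{n,L}(x)=d_{i_0+1}^{n,L}(x)$, and their contributions cancel in the alternating sum. This step uses only the degeneracy relation and no axiom.

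For $j<i_0$, formula \eqref{leftcoord} leaves $y_k=x_k$ for every $k>j$, so in particular $y_{i_0}=x_{i_0}$, $y_{i_0+1}=x_{i_0+1}$, and $y_{i_0+2}=x_{i_0+2}$, and the original degeneracy persists at this triple. Since $j\ge p$ combined with $j<i_0$ forces $p<i_0$, and $i_0+2\le n+1-k$ is inherited, the triple sits at positions allowed by the definition of $C_{n-1}^{(p,k),D}$, so $d_j^{n,L}(x)$ is a generator of the degenerate subcomplex.

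The main obstacle is the case $j>i_0+1$, where the three coordinates $y_{i_0},y_{i_0+1},y_{i_0+2}$ are nested bracket expressions and the degeneracy no longer sits there directly. The plan is to show it has shifted one step to the right, onto the triple $(y_{i_0+1},y_{i_0+2},y_{i_0+3})$. Setting $b=x_{i_0}$, $c=x_{i_0+1}$, $d=x_{i_0+2}$, and $e=y_{i_0+3}$, the recursion gives $y_{i_0+2}=[cde]$ and $y_{i_0+1}=[bc[cde]]$, and axiom RN applied to $(b,c,d,e)$ rewrites
\[
[y_{i_0+1}\,y_{i_0+2}\,y_{i_0+3}]=[[bc[cde]][cde]e]=[[bcd]de].
\]
Substituting the degeneracy $c=[bcd]$ collapses the right-hand side to $[cde]=y_{i_0+2}$, so the shifted triple is degenerate; its position lies in the allowed range for $C_{n-1}^{(p,k),D}$ because the case itself requires $i_0+2\le j\le n-k$. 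Combining the three groups yields $\partial_n^{(p,k),L}(x)\in C_{n-1}^{(p,k),D}(X,[\ ])$.
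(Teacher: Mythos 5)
Your proposal is correct and follows essentially the same route as the paper's own proof: the same three-way split of the face maps relative to the degeneracy position, the same cancellation of the adjacent pair $d_{i_0}^{n,L}=d_{i_0+1}^{n,L}$, and the same use of axiom RN to show that for the later face maps the degeneracy shifts one step to the right. The bookkeeping of why the shifted triple stays within the allowed index range (using $j\ge p$ for the early case and $i_0+2\le j\le n-k$ for the late case) also matches the paper's argument.
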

\begin{proof}
Let $a$, $b$, $c$ be a degeneracy triple (i.e., $b=[abc]$) occuring in the $(n+2)$-tuple $x$ on coordinates with indices $j$, $j+1$, and $j+2$, where $p\leq j$ and $j+2\leq n+1-k$. We need to show that all $d_i^{n,L}x$, for $p\leq i\leq n-k$, either get cancelled, or contain a degeneracy triple that is not on the first $p$ or on the last $k$ coordinates. $d_i^{n,L}x$ contains at the end the sequence
$x_{i+1},\ldots,x_{n+1}$. It follows that $a$, $b$, $c$ occurs also in all
$d_i^{n,L}x$ with $i\in\{0,\ldots, j-1\}$. In all $d_i^{n,L}x$, the first coordinate element of $x$ is removed. Thus, not having to consider $d_i^{n,L}x$ for $i<p$ contributes to the fact that the degeneracy triple in not too early in $d_i^{n,L}x$ in case $j=p$.
Now let $i=j+1$:
\begin{align*}
&d_{j+1}^{n,L}(x_0,\ldots,x_{j-1},a,b,c,x_{j+3},\ldots,x_{n+1})\\
& =d_j^{n,L}(x_0,\ldots,x_{j-1},a,[abc],c,x_{j+3},\ldots,x_{n+1})\\
& =d_j^{n,L}(x_0,\ldots,x_{j-1},a,b,c,x_{j+3},\ldots,x_{n+1}).
\end{align*}
Since in $\partial_n^{(p,k),L}$, $d_{j}^{n,L}$ and $d_{j+1}^{n,L}$ appear with opposite signs, this pair gets removed. Now let $j+2\leq i \leq n-k$. We will show that in $d_{i}^{n,L}x$, the triple $d^{n,L}_{i,j+1}x, d^{n,L}_{i,j+2}x, d^{n,L}_{i,j+3}x$ is a degeneracy triple.
From the formula (\ref{leftcoord}), we get:
\begin{align*}
d_{i,j+1}^{n,L}x & =[x_jx_{j+1}(d_{i,j+2}^{n,L}x)]=
[x_jx_{j+1}[x_{j+1}x_{j+2}(d_{i,j+3}^{n,L}x)]]\\
& =[ab[bc(d_{i,j+3}^{n,L}x)]],
\end{align*}
and 
\[
d_{i,j+2}^{n,L}x =[x_{j+1}x_{j+2}(d_{i,j+3}^{n,L}x)]
=[bc(d_{i,j+3}^{n,L}x)].
\]
Now we check the degeneracy using the condition RN:
\begin{align*}
& [(d^{n,L}_{i,j+1}x) (d^{n,L}_{i,j+2}x) (d^{n,L}_{i,j+3}x)] =[[[ab[bc(d^{n,L}_{i,j+3}x)]][bc(d^{n,L}_{i,j+3}x)](d^{n,L}_{i,j+3}x)]\\
& =[[abc]c(d^{n,L}_{i,j+3}x)]=[bc(d^{n,L}_{i,j+3}x)]=d^{n,L}_{i,j+2}x.
\end{align*}
The degeneracy triple now has increased indices $j+1$, $j+2$, and $j+3$, so it is not too early in $d_i^{n,L}=(d_{i,1}^{n,L},\ldots,d_{i,k}^{n,L},\ldots,d_{i,n+1}^{n,L})$. It is also not too late,
since we have considered $i$ such that $j+3\leq i+1\leq n-k+1$, as needed.
\end{proof}

\begin{lemma}\label{pkdegR}
Let $X$ be a set with a ternary operation $[\ ]$ satisfying the condition LN. Then
\[
\partial_n^{(p,k),R}(C_n^{(p,k),D}(X,[\ ]))\subset C_{n-1}^{(p,k),D}(X,[\ ]).
\]
\end{lemma}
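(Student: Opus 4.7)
The plan is to mirror the proof of Lemma \ref{pkdegL}, exploiting the symmetry between $d_i^{n,L}$ and $d_i^{n,R}$. Whereas $d_i^{n,L}$ is built from right to left and preservation of degeneracy invoked axiom RN, $d_i^{n,R}$ is built from left to right by (\ref{rightcoord}), so the analogous step will have to invoke axiom LN. Fix a generator $x = (x_0,\ldots,x_{n+1}) \in C_n^{(p,k),D}$ witnessing degeneracy by $(a,b,c) := (x_j, x_{j+1}, x_{j+2})$ with $b = [abc]$, $p \leq j$, and $j + 2 \leq n + 1 - k$. I will show that in
\[
\partial_n^{(p,k),R} x = \sum_{i=p}^{n-k} (-1)^i d_i^{n,R} x,
\]
each summand either pairs with another and cancels, or carries a degeneracy triple at an index still satisfying the bounds for $C_{n-1}^{(p,k),D}$.

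Partition the index range $i \in \{p,\ldots,n-k\}$ into three regions. For $i \geq j+2$, formula (\ref{rightcoord}) gives $d_{i,k}^{n,R} x = x_k$ whenever $k \leq i$, so positions $j, j+1, j+2$ retain $a, b, c$ and the degeneracy persists at the original index $j$. For $i \in \{j, j+1\}$, direct recursion using $[abc] = b$ produces $d_j^{n,R} x = d_{j+1}^{n,R} x$ coordinate by coordinate (both are $(x_0,\ldots,x_{j-1},a,b,[bc x_{j+3}],\ldots)$), so the two cancel via their opposite signs.

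The interesting case is $i \leq j-1$, where axiom LN is needed. Setting $E := d_{i,j-1}^{n,R} x$ (whatever the recursion has produced from the earlier coordinates), the three relevant output positions read
\begin{align*}
d_{i,j-1}^{n,R} x &= E, \\
d_{i,j}^{n,R} x &= [E\,a\,b], \\
d_{i,j+1}^{n,R} x &= [[E\,a\,b]\,b\,c].
\end{align*}
The instance $[Ea[abc]] = [E[Eab][[Eab]bc]]$ of LN, combined with $[abc] = b$, collapses the left side to $[Eab]$ and yields $[Eab] = [E\,(d_{i,j}^{n,R} x)\,(d_{i,j+1}^{n,R} x)]$. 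Hence $(E, [Eab], [[Eab]bc])$ is a degeneracy triple at index $j-1$ of $d_i^{n,R} x$.

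The main obstacle will be identifying the correct LN substitution in this third case (LN is asymmetric across its four slots, so the right match is not immediate), together with a small amount of index bookkeeping. The upper bound $(j - 1) + 2 = j + 1 \leq n - k$ follows from $j + 2 \leq n + 1 - k$. The lower bound $p \leq j - 1$ would fail only if $j = p$, but then the region $i \leq j - 1 = p - 1$ lies outside the summation range $\{p,\ldots,n-k\}$ and never contributes; this is the right-sided counterpart of the remark in the proof of Lemma \ref{pkdegL} that $i < p$ terms are excluded, except that here the exclusion rescues the lower rather than the upper shifted index.
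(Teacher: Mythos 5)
Your proposal is correct and is exactly the paper's intended argument: the paper proves Lemma \ref{pkdegR} by declaring it ``completely symmetric'' to Lemma \ref{pkdegL}, and your three cases ($i\geq j+2$ leaving the triple untouched, $i\in\{j,j+1\}$ cancelling via $[abc]=b$, and $i\leq j-1$ shifting the triple to index $j-1$ via the LN instance $[Ea[abc]]=[E[Eab][[Eab]bc]]$) are precisely the mirror images of the paper's cases, with the index bookkeeping (including the vacuity of the third case when $j=p$) handled correctly.
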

\begin{proof}
The proof is completely symmetric to the proof of Lemma \ref{pkdegL}.
\end{proof}

From Lemmas \ref{pkdegL} and \ref{pkdegR} follows

\begin{theorem}\label{pkdegLR}
Let $X$ be a set with a ternary operation $[\ ]\colon X^3\to X$ satisfying the conditions LN and RN. Then
\[
\partial_n^{(p,k)}(C_n^{(p,k),D}(X,[\ ]))\subset C_{n-1}^{(p,k),D}(X,[\ ]).
\]
\end{theorem}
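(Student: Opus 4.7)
The statement is an immediate consequence of the two preceding lemmas, so the plan is essentially to combine them and do bookkeeping. First I would unfold the definition $\partial_n^{(p,k)} = \partial_n^{(p,k),L} - \partial_n^{(p,k),R}$, which is just the $(p,k)$-truncation of $\partial_n = \partial_n^L - \partial_n^R$ from Definition \ref{homology}. Since the axioms LN and RN are both assumed, Lemma \ref{pkdegR} (which needs LN) and Lemma \ref{pkdegL} (which needs RN) both apply. Each lemma guarantees that its respective one-sided differential sends $C_n^{(p,k),D}(X,[\ ])$ into $C_{n-1}^{(p,k),D}(X,[\ ])$.

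Next I would invoke closure of the submodule $C_{n-1}^{(p,k),D}(X,[\ ])$ under $R$-linear combinations: if $\xi \in C_n^{(p,k),D}(X,[\ ])$, then both $\partial_n^{(p,k),L}\xi$ and $\partial_n^{(p,k),R}\xi$ lie in $C_{n-1}^{(p,k),D}(X,[\ ])$, hence so does their difference. This is enough to conclude
\[
\partial_n^{(p,k)}\bigl(C_n^{(p,k),D}(X,[\ ])\bigr) \subset C_{n-1}^{(p,k),D}(X,[\ ]).
\]

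There is essentially no obstacle here: the genuine work was already carried out in Lemma \ref{pkdegL}, where the RN axiom was used to show that the degeneracy triple on coordinates $j,j+1,j+2$ either produces a matched pair of terms $d_j^{n,L}$ and $d_{j+1}^{n,L}$ that cancel in $\partial_n^{(p,k),L}$, or gets pushed to coordinates $j+1,j+2,j+3$ of $d_i^{n,L}x$ for $i\geq j+2$, still inside the allowed window $[p, n-k]$; and symmetrically in Lemma \ref{pkdegR} with LN. The only minor check worth remarking on is that the truncation bounds $p$ and $n-k$ match in the $L$- and $R$-versions, so that the paired cancellations happen within the same sum $\sum_{i=p}^{n-k}(-1)^i d_i^n$, and no boundary term is left over. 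Given this, the theorem follows in one line.
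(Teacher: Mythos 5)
Your proposal is correct and matches the paper exactly: the paper derives Theorem \ref{pkdegLR} immediately from Lemmas \ref{pkdegL} and \ref{pkdegR}, with the difference of the two one-sided images lying in the submodule $C_{n-1}^{(p,k),D}(X,[\ ])$ by linearity. Your additional remarks about where the real work happens and about the matching truncation bounds are accurate but not needed beyond the one-line deduction.
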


Now that we have a chain subcomplex depending on $p$ and $k$, we can define a quotient complex.

\begin{definition}
Let $X$ be a set with a ternary operation $[\ ]\colon X^3\to X$ satisfying the conditions LN and RN. 
We define the quotient complex
\[
(C_n^{(p,k)}(X,[\ ]),\partial_n^{(p,k)})=(C_n(X)/C_n^{(p,k),D}(X,[\ ]),\partial_n^{(p,k)}),
\]
with the induced differential (and the same notation). We call its homology {\it truncated}, and denote it by $H^{(p,k)}(X,T)$.
\end{definition}

Let $\mathcal{C}^{(p,k)}=(\mathcal{C}_0,\mathcal{C}_1,\ldots,\mathcal{C}_{p-1},\mathcal{C}_p,\mathcal{C}_{p+1},\ldots,\mathcal{C}_{p+k+1})$ be a layered KTQ coloring of a knot diagram $D$. We will now explain how to assign to it a cycle in the homology group $H^{(p,k)}_n(X,[\ ])$, where $n=p+k+1$. In case $D$ is a knotted surface diagram, the corresponding cycle is from $H^{(p,k)}_n(X,[\ ])$ with $n=p+k+2$.

\begin{definition}
Let $(X,[\ ])$ be a KTQ, and $D$ be an oriented diagram. For a layered coloring
$\mathcal{C}^{(p,k)}=(\mathcal{C}_0,\mathcal{C}_1,\ldots,\mathcal{C}_{p-1},\mathcal{C}_p,\mathcal{C}_{p+1},\ldots,\mathcal{C}_{p+k+1})$ of $D$, and for a given $e$, $cr$, $db$, or $tr$ of $D$, we define its {\it colored paths} $cp(e,\mathcal{C}^{(p,k)})$, $cp(cr,\mathcal{C}^{(p,k)})$, $cp(db,\mathcal{C}^{(p,k)})$, and $cp(tr,\mathcal{C}^{(p,k)})$, by extending the colored paths $cp(e,\mathcal{C}_p)$, $cp(cr,\mathcal{C}_p)$, $cp(db,\mathcal{C}_p)$, and $cp(tr,\mathcal{C}_p)$
of the initial KTQ coloring $\mathcal{C}_p$ in the following way. In each case, the colored path $cp(*,\mathcal{C}_p)$ begins with the color of the source region $\mathcal{C}_p(r_0)$, and ends with the color of the target region $\mathcal{C}_p(r_t)$, where $*$ denotes one of: $e$, $cr$, $db$, $tr$. A $cp(*,\mathcal{C}^{(p,k)})$ begins with the left colorings of the source region, and ends with the right colorings of the target region. That is, we can write $cp(*,\mathcal{C}^{(p,k)})$ as the following sequence:
\[
(\mathcal{C}_0(r_0),\ldots,\mathcal{C}_{p-1}(r_0),\mathcal{C}_{p}(r_0),\mathcal{C}_{p}(r_1),\ldots,\mathcal{C}_{p}(r_t),\mathcal{C}_{p+1}(r_t),\ldots,\mathcal{C}_{p+k+1}(r_t)),
\]
where $r_0,r_1,\ldots,r_t$ are the regions appearing in $cp(*,\mathcal{C}_p)$.
\end{definition}

\begin{figure}
\begin{center}
\includegraphics[height=6.5 cm]{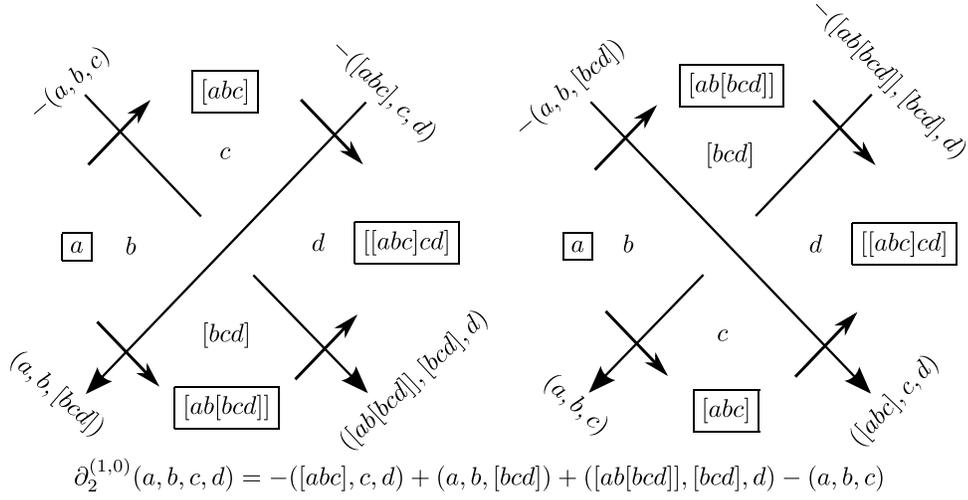}
\caption{The differential of a left-extended colored path of a crossing.}\label{quasidiffleft}
\end{center}
\end{figure}

\begin{figure}
\begin{center}
\includegraphics[height=6.5 cm]{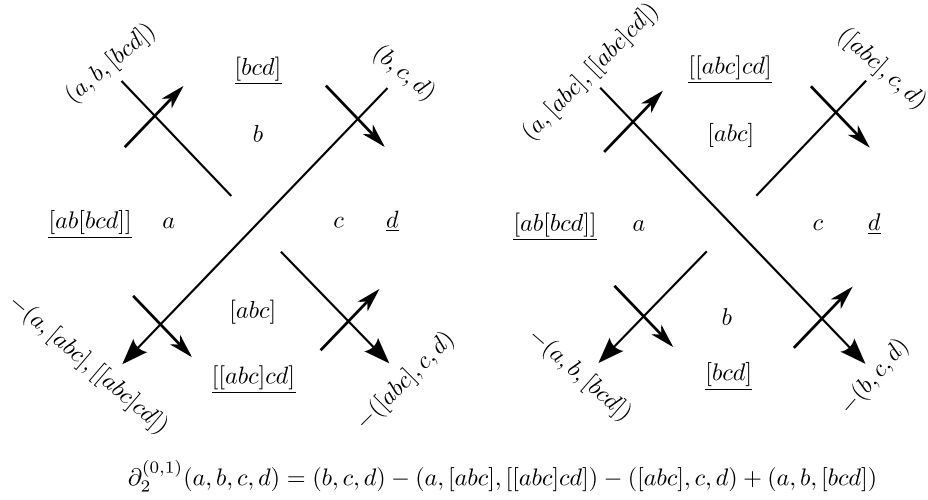}
\caption{The differential of a right-extended colored path of a crossing.}\label{quasidiffright}
\end{center}
\end{figure}

\begin{theorem}\label{knotcycle}
Let $D$ denote an oriented knot diagram on a plane, or on an oriented compact surface $F$. Let $(X,[\ ])$ be a KTQ, and $\mathcal{C}^{(p,k)}$ be a layered coloring of $D$ using elements of $(X,[\ ])$. Then 
\[
c(\mathcal{C}^{(p,k)}):=\sum_{cr}\tau(cr)cp(cr,\mathcal{C}^{(p,k)}),
\]
with the summation taken over all crossings of $D$, is a cycle in 
$H_n^{(p,k)}(X,[\ ])$, where $n=p+k+1$.
\end{theorem}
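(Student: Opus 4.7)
The plan is to compute $\partial^{(p,k)}_n c(\mathcal{C}^{(p,k)})$ crossing by crossing, identify each face value as the layered colored path of one of the four arcs of $D$ meeting the crossing, and then observe that these arc-terms cancel in pairs along the edges of $D$. This reduces the cycle property to the edge-gluing combinatorics that already handles the base case $p=k=0$ in \cite{Nie17a}.

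First I exploit $n=p+k+1$, which forces $n-k=p+1$, so the differential has only four face components:
\[
\partial^{(p,k)}_n = (-1)^p(d_p^{n,L}-d_p^{n,R}) + (-1)^{p+1}(d_{p+1}^{n,L}-d_{p+1}^{n,R}).
\]
Fix a crossing $cr$ with source $r_0$, side region $r_1$, target $r_2$, and fourth region $r_3$, and set $x=cp(cr,\mathcal{C}^{(p,k)})$. The main local claim is that each of the four faces above evaluated on $x$ is the layered colored path of an arc of $D$ at $cr$. I verify this by induction using the recursive formulas (\ref{leftcoord}) and (\ref{rightcoord}). For instance, (\ref{leftcoord}) gives
\[
d_{p,p}^{n,L}x = [x_{p-1}x_px_{p+1}] = [\mathcal{C}_{p-1}(r_0)\mathcal{C}_p(r_0)\mathcal{C}_p(r_1)] = \mathcal{C}_{p-1}(r_1)
\]
by the left-coloring identity (Def. \ref{colsys}) applied to the arc $r_0\to r_1$; iterating downward through the $p$ left-coloring layers yields $d_{p,j}^{n,L}x=\mathcal{C}_{j-1}(r_1)$ for all $j\le p$, and the positions $>p$ are unchanged, so $d_p^{n,L}x$ is exactly $cp(e_{r_1\to r_2},\mathcal{C}^{(p,k)})$. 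Parallel computations, using the crossing rule $[x_px_{p+1}x_{p+2}]=\mathcal{C}_p(r_3)$ to launch the induction for the $(p+1)$-th face, and the mirror right-coloring identity together with (\ref{rightcoord}) for the $R$-faces, identify $d_{p+1}^{n,L}x$, $d_p^{n,R}x$, $d_{p+1}^{n,R}x$ with the layered colored paths of the arcs $r_3\to r_2$, $r_0\to r_3$, $r_0\to r_1$, respectively.

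What remains is sign bookkeeping. Inspecting the region arrangements at a positive versus a negative crossing, one checks that the cyclic labeling $(r_0,r_1,r_2,r_3)$ around $cr$ differs in a way that, after multiplication by $\tau(cr)$, each of the four arcs contributes with sign $+$ if it is outgoing from $cr$ and $-$ if it is incoming, independent of the crossing sign (up to the overall global factor $(-1)^p$). Each non-loop arc of $D$ is outgoing at one of its endpoints and incoming at the other, so its two contributions to $\partial^{(p,k)}_n c(\mathcal{C}^{(p,k)})$ cancel; loop situations produce $cp(e)-cp(e)$ pairs that cancel trivially. Thus $\partial^{(p,k)}_n c(\mathcal{C}^{(p,k)})=0$ in $C_{n-1}(X)$, and a fortiori in the quotient $C^{(p,k)}_{n-1}(X,[\ ])$, so $c(\mathcal{C}^{(p,k)})$ is a cycle in $H_n^{(p,k)}(X,[\ ])$.

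The main obstacle is the local identification of the four faces of $x$ with layered colored paths of arcs: the induction runs one coloring-layer at a time, and each step must invoke precisely the defining relation of a left- or right-coloring. This is where the layered structure (and, through it, the axioms LN and RN, which underwrite well-definedness of the layers) enters the argument. Once the identification is in hand, the cycle property follows by exactly the combinatorial cancellation used in the single-layer case $p=k=0$ of \cite{Nie17a}.
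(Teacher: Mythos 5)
Your proposal is correct and follows essentially the same route as the paper's proof: reduce $\partial_n^{(p,k)}$ to the four faces $d_p^{n,L}, d_p^{n,R}, d_{p+1}^{n,L}, d_{p+1}^{n,R}$, identify each with the layered colored path of one of the four edges at the crossing, and cancel incoming against outgoing contributions along each edge of $D$. The only difference is that you spell out the face-to-edge identification inductively via the coordinate formulas (\ref{leftcoord}) and (\ref{rightcoord}) and the left/right coloring identities, whereas the paper asserts these identifications with reference to its figures.
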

\begin{proof}
The proof is obtained by considering the differential 
\begin{align*}
\partial_{n=p+k+1}^{(p,k)}(cp(cr,\mathcal{C}^{(p,k)})) & =
(-1)^p d_p^{n,L}(cp(cr,\mathcal{C}^{(p,k)}))-(-1)^p d_p^{n,R}(cp(cr,\mathcal{C}^{(p,k)}))\\
& +(-1)^{p+1}d_{p+1}^{n,L}(cp(cr,\mathcal{C}^{(p,k)}))-(-1)^{p+1} d_{p+1}^{n,R}(cp(cr,\mathcal{C}^{(p,k)}))
\end{align*}
of a colored path of a positive crossing. It is a sum of signed colored paths of edges of the crossing. If $e_{inL}$, $e_{inR}$, $e_{outL}$, $e_{outR}$ denote, respectively,
the left incoming edge, the right incoming edge, the left outgoing edge, and the right outgoing edge of the crossing, as in the Figures \ref{quasidiffleft} and 
\ref{quasidiffright}, then
\begin{align*}
& d_p^{n,L}(cp(cr,\mathcal{C}^{(p,k)}))=cp(e_{inR},\mathcal{C}^{(p,k)}),\\
& d_p^{n,R}(cp(cr,\mathcal{C}^{(p,k)}))=cp(e_{outL},\mathcal{C}^{(p,k)}),\\
& d_{p+1}^{n,L}(cp(cr,\mathcal{C}^{(p,k)}))=cp(e_{outR},\mathcal{C}^{(p,k)}),\\
& d_{p+1}^{n,R}(cp(cr,\mathcal{C}^{(p,k)}))=cp(e_{inL},\mathcal{C}^{(p,k)}).
\end{align*}
Thus, the colored paths of the incoming edges have opposite signs to the colored paths of the outgoing edges in the differential, which leads to a reduction when an incoming (resp. outgoing) edge becomes outgoing (resp. incoming) at its next crossing, and their colored paths remain the same.
The same holds for a negative crossing. 
\end{proof}

\begin{figure}
\begin{center}
\includegraphics[height=4.5 cm]{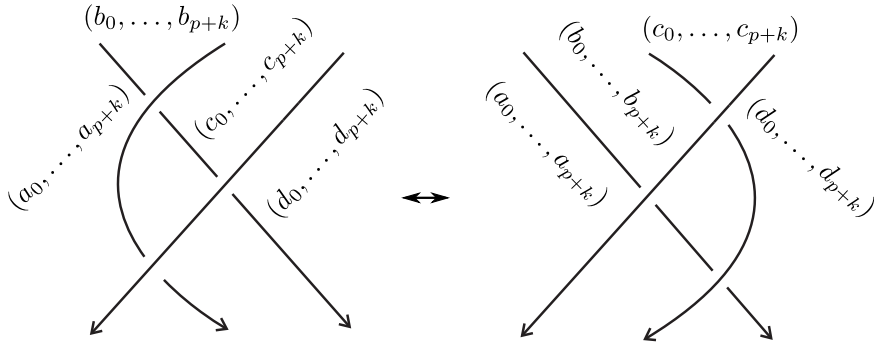}
\caption{Labels for $\mathcal{C}^{(p,k)}$ in the third Reidemeister move}\label{rad3brc}
\end{center}
\end{figure}

\begin{theorem}
The homology class in $H_{p+k+1}^{(p,k)}(X,[\ ])$ of the cycle $c(\mathcal{C}^{(p,k)})$ as defined in Thm \ref{knotcycle} is not changed by the Reidemeister moves.
\end{theorem}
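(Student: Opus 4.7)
The plan is to verify invariance separately under each of the three Reidemeister moves. For R1 and R2, I would show that the difference $c(\mathcal{C}^{(p,k)}_{\text{after}}) - c(\mathcal{C}^{(p,k)}_{\text{before}})$ already lies in the degenerate subcomplex $C_{p+k+1}^{(p,k),D}(X,[\ ])$, and for R3 I would exhibit an explicit $(p+k+4)$-tuple $y$ whose $\partial_{p+k+2}^{(p,k)}$-image equals this difference modulo degenerates. A common observation throughout is that outside a small neighborhood of the move the coloring $\mathcal{C}_p$ and all of its left and right layer extensions are unaltered, and by Definition \ref{colsys} the layer colorings at every source/target region meeting the boundary of the move region agree on both sides.

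For R1, the single new crossing has two of its three relevant regions $r_0, r_1, r_2$ (in the sense of Definition \ref{colpath}) equal, so the middle triple $(\mathcal{C}_p(r_0), \mathcal{C}_p(r_1), \mathcal{C}_p(r_2))$ satisfies one of the equivalent degeneracy conditions D1--D3. After inserting the $p$ left layer coordinates on the left and the right layer coordinates on the right, the degeneracy triple sits at positions $p, p+1, p+2$, which is exactly in the range allowed by Definition \ref{pkdegs}, so the resulting tuple lies in $C^{(p,k),D}_{p+k+1}$. For R2, the two new crossings carry opposite signs and share the four regions of the bigon; their $\mathcal{C}_p$-colored paths coincide as ordered triples, and by the previous observation the left and right layer extensions coincide too, so the two contributions cancel inside $c(\mathcal{C}^{(p,k)}_{\text{after}}) - c(\mathcal{C}^{(p,k)}_{\text{before}})$.

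For R3, referring to Fig.~\ref{rad3brc}, let $a, b, c, d$ be the colors (under $\mathcal{C}_p$) of the four distinguished regions common to the two sides of the move, and let $\ell_0, \ldots, \ell_{p-1}$ and $r_1, \ldots, r_k$ be the layer colors of the outermost source (resp.~outermost target) region of the move. I would take
\[
y = (\ell_0, \ldots, \ell_{p-1}, a, b, c, d, r_1, \ldots, r_k) \in C_{p+k+2}(X)
\]
and claim $\partial_{p+k+2}^{(p,k)}(y) \equiv c(\mathcal{C}^{(p,k)}_{\text{after}}) - c(\mathcal{C}^{(p,k)}_{\text{before}}) \pmod{C^{(p,k),D}_{p+k+1}}$. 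For $n = p+k+2$ the truncated differential $\partial_n^{(p,k)}$ uses only the face indices $i \in \{p, p+1, p+2\}$, yielding six signed face terms $\pm(d_i^{n,L} - d_i^{n,R})(y)$. By the inductive coordinate formulas (\ref{leftcoord}) and (\ref{rightcoord}), each such term evaluates to a $(p+k+3)$-tuple whose central four coordinates match the colored path of one of the six crossings of the R3 move, while its remaining $p$ left (resp.~$k$ right) coordinates reproduce the left (resp.~right) layer colors of that crossing's source (resp.~target) region. The key identities required -- for instance that the derived label $[[abc]cd]$ or $[ab[bcd]]$ appearing in the differential matches the actual color of the corresponding region -- are exactly the nesting axioms LN and RN.

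The principal obstacle is the R3 matching: carefully bookkeeping which of the six face terms of $\partial_{p+k+2}^{(p,k)}(y)$ corresponds to which signed colored path of which crossing, and checking that the remaining layer coordinates really coincide with the ambient layer colors at the corresponding source or target region. This reduces to showing that the inductive differentials (\ref{leftcoord})--(\ref{rightcoord}) implement exactly the left and right propagation rules of Definition \ref{colsys} -- a fact already implicit in the proof of Theorem \ref{knotcycle}, where the four face maps at a single crossing were shown to recover the colored paths of its four edges. Once this verification is completed, invariance under R3 follows, and the extension from planar diagrams to knot diagrams on a compact oriented surface $F$ is automatic since all of the arguments are local.
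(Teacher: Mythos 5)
Your proposal is correct and follows essentially the same route as the paper's proof: R1 contributes a single degenerate $(p+k+3)$-tuple (with the degeneracy landing in the admissible window at positions $p,p+1,p+2$), R2 contributes two cancelling terms of opposite sign, and the R3 difference is realized as the truncated boundary $\partial_{p+k+2}^{(p,k)}$ of the $(p+k+4)$-tuple obtained by flanking the four main-layer colors $(a,b,c,d)$ with the left layers of the source region and the right layers of the target region. This is exactly the bounding chain $(a_0,\ldots,a_{p-1},a_p,b_p,c_p,d_p,d_{p+1},\ldots,d_{p+k})$ that the paper exhibits, so no further comparison is needed.
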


\begin{proof}
The first Reidemeister move adds or removes a degenerate cycle that consists of a single $(p+k+3)$-tuple. The second Reidemeister move involves two crossings with opposite signs but equal colored paths. Now consider the third Reidemeister move in which all the crossings are positive, with the coloring labels as in Fig. \ref{rad3brc}. The contributions from the crossings after the move, minus the contributions before the move are equal (up to the sign) to the boundary 
\[
\partial_{p+k+2}^{(p,k)}(a_0,\ldots,a_{p-1},a_p,b_p,c_p,d_p,d_{p+1},\ldots,d_{p+k}).
\]
\end{proof}

\begin{theorem}\label{surfacecycle}
Let $D$ denote an oriented knotted surface diagram in $\mathbb{R}^3$. Let $(X,[\ ])$ be a KTQ, and $\mathcal{C}^{(p,k)}$ be a layered coloring of $D$ using elements of $(X,[\ ])$. Then 
\[
c(\mathcal{C}^{(p,k)}):=\sum_{tr}\tau(tr)cp(tr,\mathcal{C}^{(p,k)}),
\]
with the summation taken over all triple points of $D$, is a cycle in 
$H_n^{(p,k)}(X,[\ ])$, where $n=p+k+2$.
\end{theorem}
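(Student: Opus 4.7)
The approach is to adapt the proof of Theorem \ref{knotcycle}, replacing crossings with triple points and edges of the knot graph with double-point arcs of the surface diagram. Since $n=p+k+2$, the differential $\partial^{(p,k)}_n=\sum_{i=p}^{p+2}(-1)^i(d_i^{n,L}-d_i^{n,R})$ has exactly six summands. In $cp(tr,\mathcal{C}^{(p,k)})$, positions $p,p+1,p+2,p+3$ carry the colors $\mathcal{C}_p(r_0),\mathcal{C}_p(r_1),\mathcal{C}_p(r_2),\mathcal{C}_p(r_3)$ of the four regions at $tr$, the earlier positions carry the iterated left colorings of $r_0$, and the later ones the right colorings of $r_3$. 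Using the inductive rules for the left and right layers together with the KTQ coloring rule at the bottom, middle, and top sheets of $tr$, I would identify each of the six tuples $d_i^{n,L/R}(cp(tr,\mathcal{C}^{(p,k)}))$ as the colored path $cp(db,\mathcal{C}^{(p,k)})$ of one of the six double-point arcs $db$ emanating from $tr$. This is the direct surface analog of the local identifications drawn in Figures \ref{quasidiffleft} and \ref{quasidiffright}, and would warrant a new picture of a triple point together with its six adjacent double-point arcs.

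Summing $\tau(tr)\,\partial^{(p,k)}_n(cp(tr,\mathcal{C}^{(p,k)}))$ over all triple points then produces, for each double-point arc $db$ whose two endpoints are triple points $tr_1$ and $tr_2$, two copies of $cp(db,\mathcal{C}^{(p,k)})$ weighted by $\tau(tr_j)$, by the $(-1)^i$ coming from the local position of $db$ at $tr_j$, and by a sign depending on whether $db$ corresponds to a $d_i^{n,L}$ or a $d_i^{n,R}$ contribution. A short local check, of the same flavor as in the proof of Theorem \ref{knotcycle}, verifies that these combined signs are opposite at the two endpoints, so the two copies cancel in $C_{n-1}(X)$.

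The main obstacle, absent from the knot case, is the treatment of double-point arcs terminating at branch points: such an arc contributes only one copy of $cp(db,\mathcal{C}^{(p,k)})$, with no matching term. At a branch point the fourth region associated with $db$ in Definition \ref{manycols} coincides with $r_1$ on the colored path, forcing $\mathcal{C}_p(r_1)=[\mathcal{C}_p(r_0)\mathcal{C}_p(r_1)\mathcal{C}_p(r_2)]$. This triple occupies positions $p,p+1,p+2$ of $cp(db,\mathcal{C}^{(p,k)}) \in C_{n-1}(X)$, and since $i=p$ satisfies the constraints $p\leq i$ and $i+2\leq (n-1)+1-k = p+2$, the tuple is a generator of $C_{n-1}^{(p,k),D}(X,[\ ])$ by condition (D2) of Definition \ref{pkdegs}. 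Hence every branch-point contribution vanishes in the truncated quotient, and combining this with the pairwise cancellations yields $\partial^{(p,k)}_n(c(\mathcal{C}^{(p,k)}))=0$ in $C_{n-1}^{(p,k)}(X,[\ ])$, as required.
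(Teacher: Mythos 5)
Your proposal follows essentially the same route as the paper's (quite terse) proof: compute $\partial^{(p,k)}_{p+k+2}$ of the colored path of a triple point, match its six terms with the colored paths of the three incoming and three outgoing double point arcs so that contributions cancel in pairs along arcs joining two triple points, and dispose of arcs ending at branch points by observing that their colored paths lie in $C_{n-1}^{(p,k),D}(X,[\ ])$. Your treatment of the branch-point case in fact supplies more detail than the paper, and the index check against Definition \ref{pkdegs} is correct.
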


\begin{proof}
The proof is obtained by considering the differential 
\[
\partial_{p+k+2}^{(p,k)}(cp(tr,\mathcal{C}^{(p,k)}))
\] 
of a colored path of a triple point, which is a sum of suitably signed colored paths of double point edges near this triple point. In a neighborhood of a triple point, there are six double point curves: three incoming and three outgoing. In the differential, the signs of the colored paths of the incoming double point edges are opposite to the signs of the colored paths of the outgoing edges. It follows that if a double point edge ends with triple points, its colored path appears twice in $\partial_{p+k+2}^{(p,k)}(c(\mathcal{C}^{(p,k)}))$, but with opposite signs. If a double point edge ends in a branch point, then its colored path is a degenerate cycle.
\end{proof}

\begin{theorem}
The homology class in $H_{p+k+2}^{(p,k)}(X,[\ ])$ of the cycle $c(\mathcal{C}^{(p,k)})$ as defined in Thm \ref{surfacecycle} is not changed by the Roseman moves.
\end{theorem}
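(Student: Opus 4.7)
The plan is to parallel the proof of the previous theorem for knot diagrams: check each Roseman move locally and show that the signed difference of cycles $c(\mathcal{C}^{(p,k)})$ before and after the move is either zero, a generator of $C_{p+k+2}^{(p,k),D}(X,[\ ])$, or the image under $\partial_{p+k+3}^{(p,k)}$ of an explicit $(p+k+5)$-tuple.

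First I would sort the Roseman moves by their effect on the set of triple points. The moves that neither create nor destroy triple points leave the cycle literally unchanged, since the sum in Theorem \ref{surfacecycle} is indexed by triple points only. Moves that introduce or remove a pair of triple points of opposite sign with identical colored paths, the surface analog of R2, produce an internal cancellation in the signed sum. Moves that drag a triple point across a branch point produce a colored path whose middle three coordinates at layer $p$ form a degeneracy triple in the sense of Definition \ref{pkdegs}, and hence vanish in $C_{p+k+2}^{(p,k)}(X,[\ ])$; here one uses that the critical coordinates lie at positions in the range $[p,p+k+1]$, so they are genuinely killed in the truncated quotient.

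The central computation, and the main obstacle, is the quadruple-point Roseman move. Around a quadruple point five coloring values $a,b,c,d,e$ at layer $p$ suffice to determine all neighboring region colors via the KTQ operation, and the layered left colorings of the source octant together with the right colorings of the target octant provide the additional values $\mathcal{C}_j(r_0)$ for $j<p$ and $\mathcal{C}_j(r_t)$ for $j>p+1$. Forming the $(p+k+5)$-tuple
\[
T:=(\mathcal{C}_0(r_0),\ldots,\mathcal{C}_{p-1}(r_0),a,b,c,d,e,\mathcal{C}_{p+2}(r_t),\ldots,\mathcal{C}_{p+k+1}(r_t)),
\]
one would verify that $\partial_{p+k+3}^{(p,k)}(T)$ equals, up to an overall sign, the difference of the signed sums of colored paths of the triple points on the two sides of the move. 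The combinatorial accounting matches: the differential $\partial_{p+k+3}^{(p,k)}$ contains exactly eight nonzero face maps, indexed by $i\in\{p,p+1,p+2,p+3\}$ and decorated by $L$ or $R$, which is precisely the count of triple points appearing in a quadruple-point move (four on each side), and the axioms LN and RN together with the coloring rule around a double point edge guarantee that each face produces the correct colored path.

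The hard part will be carrying out this quadruple-point matching in detail. One must enumerate the eight triple points around the quadruple point together with their signs $\tau(tr)$, compute the induced coloring of every adjacent octant from $a,b,c,d,e$ via iterated applications of $[\ ]$, and check term by term that the eight faces of $T$ reproduce these colored paths with the sign pattern $(-1)^i$ dictated by $\partial_{p+k+3}^{(p,k),L}-\partial_{p+k+3}^{(p,k),R}$. The extra coordinates $\mathcal{C}_j(r_0)$ for $j<p$ and $\mathcal{C}_j(r_t)$ for $j>p+1$ propagate through the computation untouched because they lie outside the summation range $[p,p+3]$ of the truncated differential, so each face of $T$ inherits the correct left and right layered extensions demanded by the definition of $cp(tr,\mathcal{C}^{(p,k)})$, completing the verification that the homology class is preserved.
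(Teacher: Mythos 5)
Your proposal follows essentially the same route as the paper: the paper's own proof is a two-line deferral to the standard-homology argument of \cite{Nie17a}, noting only that one uses the higher differentials and the boundary of the quadruple point's color sequence extended by the left colors of its source region at the beginning and the right colors of its target region at the end --- which is precisely your tuple $T$. Your sketch is in fact more explicit than the paper's (the taxonomy of the Roseman moves, the degeneracy argument for the branch-point move, the matching of the eight face maps of $\partial^{(p,k)}_{p+k+3}$ against the eight local triple points of the tetrahedral move), and, like the paper, it leaves the term-by-term quadruple-point verification to the reader.
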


\begin{proof}
The proof is similar to the one for standard homology in \cite{Nie17a}, but it uses higher differentials and the boundary of a sequence of colors extending the one from \cite{Nie17a} by adjoining the left colors of its initial region at the beginning, and the right colors of its final region at the end.
\end{proof}

Cohomology groups for the theory described above are defined in a usual dual way, and lead to the cocycle invariants (see \cite{Nie17a}, where it was done for the basic KTQ (co)homology, see also \cite{CJKLS03} for the case of quandle (co)homology).

\section{Computational examples}
In this section we present some calculations using GAP \cite{GAP4}.
Our main tool is a homological classification of cycles associated with colorings of knot diagrams. Suppose that a knot diagram $D$ has $m$ layered colorings $\mathcal{C}^{p,k}$, and thus also $m$ associated cycles in $H_{p+k+1}^{(p,k)}(X,[\ ])$. Then of interest is the partition of $m$ into positive integers $\{m_1,m_2,\ldots,m_l\}$ describing the fact that there are $m_i$ coloring cycles, for 
$1\leq i \leq l$, that are homologically the same.

The first KTQ $(X,[\ ])$ that we will use has five elements, and its multiplication cube can be sliced into the following five matrices, each matrix for a fixed first coordinate of 
$[xyz]$.
For example $[123]=1$ and $[234]=3$.
\[ 
\begin{array}{|c| c ccccc} 
[1yz]&1 & 2 & 3 & 4 & 5 \\
\hline 
1 & 1 & 3 & 4 & 5 & 2 \\
2 & 2 & 4 & 1 & 3 & 5 \\
3 & 3 & 2 & 5 & 1 & 4 \\
4 & 4 & 5 & 3 & 2 & 1 \\ 
5 & 5 & 1 & 2 & 4 & 3 \\ 
\end{array}
\begin{array}{|c| c ccccc} 
[2yz]&1 & 2 & 3 & 4 & 5 \\
\hline 
1 & 3 & 2 & 5 & 1 & 4 \\
2 & 4 & 5 & 3 & 2 & 1 \\
3 & 2 & 4 & 1 & 3 & 5 \\
4 & 5 & 1 & 2 & 4 & 3 \\ 
5 & 1 & 3 & 4 & 5 & 2 \\ 
\end{array}
\begin{array}{|c| c ccccc} 
[3yz]&1 & 2 & 3 & 4 & 5 \\
\hline 
1 & 4 & 5 & 3 & 2 & 1 \\
2 & 1 & 3 & 4 & 5 & 2 \\
3 & 5 & 1 & 2 & 4 & 3 \\
4 & 3 & 2 & 5 & 1 & 4 \\ 
5 & 2 & 4 & 1 & 3 & 5 \\ 
\end{array}
\]

\[
\begin{array}{|c| c ccccc} 
[4yz]&1 & 2 & 3 & 4 & 5 \\
\hline 
1 & 5 & 1 & 2 & 4 & 3 \\
2 & 3 & 2 & 5 & 1 & 4 \\
3 & 1 & 3 & 4 & 5 & 2 \\
4 & 2 & 4 & 1 & 3 & 5 \\ 
5 & 4 & 5 & 3 & 2 & 1 \\ 
\end{array}
\begin{array}{|c| c ccccc} 
[5yz]&1 & 2 & 3 & 4 & 5 \\
\hline 
1 & 2 & 4 & 1 & 3 & 5 \\
2 & 5 & 1 & 2 & 4 & 3 \\
3 & 4 & 5 & 3 & 2 & 1 \\
4 & 1 & 3 & 4 & 5 & 2 \\ 
5 & 3 & 2 & 5 & 1 & 4 \\ 
\end{array}
\]

Its standard first homology group $H_1^N(X,[\ ])=H_1^{(0,0)}(X,[\ ])$ has no torsion part. On the other hand, the torsion part in both $H_2^{(1,0)}(X,[\ ])$ and $H_2^{(0,1)}(X,[\ ])$ is $\mathbb{Z}_5$.

We analyzed colorings of links with two components that have up to eight crossings in the table from \cite{Git04} (there are 48 of them). All are recognized as nontrivial by this KTQ, either by the number of the standard KTQ colorings, or by using $H_2^{(1,0)}(X,[\ ])$ (in this case $H_2^{(0,1)}(X,[\ ])$ works equally well). The number of the standard KTQ colorings $\mathcal{C}^{(0,0)}$ for these links is either 25 or 125. Having 25 colorings proves nontriviality, as the number of colorings for a trivial link with two components is 125. For the ones with 125 colorings, the classification of the corresponding cycles in $H_1^{(0,0)}(X,[\ ])$ does not help, as they are all homologically trivial. In such cases, however, there are 625 colorings $\mathcal{C}^{(1,0)}$, and the partition for the corresponding coloring cycles is $\{225,200,200\}$, which shows that the links are not trivial.

If a knot diagram is on the plane, then instead of considering all colorings, we can just look at the ones with a fixed color of the outside region. For example, knots $7_4$ and $8_8$ both have 25 homologically trivial colorings $\mathcal{C}^{(0,0)}$
with the color of the outside region equal to 1. But when we look at 125 colorings 
$\mathcal{C}^{(1,0)}$, with the color of the outside region equal to 1 for the primary colorings, and arbitrary for the left colorings, then the partitions for the corresponding cycles in $H_2^{(1,0)}(X,[\ ])$ are $\{45,40,40\}$ and $\{125\}$, respectively.

Our second example uses a six-element KTQ $(X,[\ ])$ described by the following tables.

\[ 
\begin{array}{|c| c cccccc} 
[1yz]&1 & 2 & 3 & 4 & 5 & 6 \\
\hline 
 1 & 3 & 1 & 6 & 5 & 4 & 2 \\
 2 & 4 & 2 & 5 & 6 & 3 & 1 \\
 3 & 1 & 3 & 2 & 4 & 5 & 6 \\
 4 & 2 & 4 & 1 & 3 & 6 & 5 \\
 5 & 6 & 5 & 3 & 1 & 2 & 4 \\
 6 & 5 & 6 & 4 & 2 & 1 & 3 \\
\end{array}
\begin{array}{|c| c cccccc} 
[2yz]&1 & 2 & 3 & 4 & 5 & 6 \\
\hline 
 1 & 4 & 5 & 2 & 1 & 3 & 6 \\
 2 & 3 & 6 & 1 & 2 & 4 & 5 \\
 3 & 5 & 4 & 6 & 3 & 1 & 2 \\
 4 & 6 & 3 & 5 & 4 & 2 & 1 \\
 5 & 2 & 1 & 4 & 5 & 6 & 3 \\
 6 & 1 & 2 & 3 & 6 & 5 & 4 \\
\end{array}
\begin{array}{|c| c cccccc} 
[3yz]&1 & 2 & 3 & 4 & 5 & 6 \\
\hline 
 1 & 6 & 2 & 3 & 4 & 5 & 1 \\
 2 & 5 & 1 & 4 & 3 & 6 & 2 \\
 3 & 2 & 6 & 1 & 5 & 4 & 3 \\
 4 & 1 & 5 & 2 & 6 & 3 & 4 \\
 5 & 3 & 4 & 6 & 2 & 1 & 5 \\
 6 & 4 & 3 & 5 & 1 & 2 & 6 \\
\end{array}
\]

\[
\begin{array}{|c| c cccccc} 
[4yz]&1 & 2 & 3 & 4 & 5 & 6 \\
\hline 
 1 & 5 & 4 & 1 & 2 & 6 & 3 \\
 2 & 6 & 3 & 2 & 1 & 5 & 4 \\
 3 & 4 & 5 & 3 & 6 & 2 & 1 \\
 4 & 3 & 6 & 4 & 5 & 1 & 2 \\
 5 & 1 & 2 & 5 & 4 & 3 & 6 \\
 6 & 2 & 1 & 6 & 3 & 4 & 5 \\ 
\end{array}
\begin{array}{|c| c cccccc} 
[5yz]&1 & 2 & 3 & 4 & 5 & 6 \\
\hline 
 1 & 1 & 3 & 5 & 6 & 2 & 4 \\
 2 & 2 & 4 & 6 & 5 & 1 & 3 \\
 3 & 3 & 1 & 4 & 2 & 6 & 5 \\
 4 & 4 & 2 & 3 & 1 & 5 & 6 \\
 5 & 5 & 6 & 1 & 3 & 4 & 2 \\
 6 & 6 & 5 & 2 & 4 & 3 & 1 \\ 
\end{array}
\begin{array}{|c| c cccccc} 
[6yz]&1 & 2 & 3 & 4 & 5 & 6 \\
\hline 
 1 & 2 & 6 & 4 & 3 & 1 & 5 \\
 2 & 1 & 5 & 3 & 4 & 2 & 6 \\
 3 & 6 & 2 & 5 & 1 & 3 & 4 \\
 4 & 5 & 1 & 6 & 2 & 4 & 3 \\
 5 & 4 & 3 & 2 & 6 & 5 & 1 \\
 6 & 3 & 4 & 1 & 5 & 6 & 2 \\
\end{array}
\]

In this case, the torsion part of $H_1^N(X,[\ ])=H_1^{(0,0)}(X,[\ ])$
is $\mathbb{Z}_3$; for $H_2^{(1,0)}(X,[\ ])$ it is $\mathbb{Z}_3^2$, and for
$H_2^{(0,1)}(X,[\ ])$ it is $\mathbb{Z}_3^2+\mathbb{Z}_9^2$.

To save the computation time in case of knot diagrams on the plane, we can use long knots instead, and fix the coloring path $(\mathcal{C}(r_0),\mathcal{C}(r_1))$ of the first arc of the long knot. Then we can also fix the colors of the (unbounded) region $r_0$ for the subsequent left and right colorings. Consider the knots $3_1$, $6_1$, and $7_4$, with braid words $\sigma_{1}\sigma_{1}\sigma_{1}$,
$\sigma_{1}\sigma_{1}\sigma_{2}\sigma_{-1}\sigma_{-3}\sigma_{2}\sigma_{-3}$, and
$\sigma_{1}\sigma_{1}\sigma_{2}\sigma_{-1}
\sigma_{2}\sigma_{2}\sigma_{3}\sigma_{-2}\sigma_{3}$, respectively. With the vertical positioning of the braid, and the top to bottom orientation, the first strand serves as the axis for the long knot, and we can take $(\mathcal{C}(r_0),\mathcal{C}(r_1))$ to be the colors of the top two left regions of the braid. In our computations we took $(\mathcal{C}(r_0),\mathcal{C}(r_1))=(1,2)$, and fixed the color of the region $r_0$ for both left and right colorings: $\mathcal{C}_L(r_0)=\mathcal{C}_R(r_0)=2$. With these restrictions, the three knots all have three colorings $\mathcal{C}^{(0,0)}$ (not distinguished by $H_1^{(0,0)}(X,[\ ])$), and three colorings $\mathcal{C}^{(1,0)}$, with respective partitions for the corresponding cycles in $H_2^{(1,0)}(X,[\ ])$ as follows: $\{1,2\}$, $\{3\}$, $\{1,2\}$.
$H_2^{(0,1)}(X,[\ ])$ proved to be stronger in this case, as the partitions for the cycles assigned to colorings $\mathcal{C}^{(0,1)}$ are, respectively:
$\{1,1,1\}$, $\{3\}$, $\{1,2\}$.

\bibliography{layers}
\bibliographystyle{plain}
\end{document}